\author{Baptiste Chantraine }
\address{Universit\'e de Nantes, France.}
\email{baptiste.chantraine@univ-nantes.fr}
\theoremstyle{remark}
\newtheorem{defn}{Definition}[section]
\newtheorem{Rem}[defn]{Remark}
\theoremstyle{plain}
\newtheorem{theorem}{Theorem}[section]
\DeclareMathAlphabet{\mathdj}{U}{msb}{m}{n}
\begin{document}

\title{Lagrangian concordance is not a symmetric relation.}
\thispagestyle{empty}

\maketitle

\begin{abstract}
  We provide an explicit example of a non trivial Legendrian knot $\Lambda$  such that there exists a Lagrangian concordance from $\Lambda_0$ to $\Lambda$ where $\Lambda_0$ is the trivial Legendrian knot with maximal Thurston-Bennequin number. We then use the map induced in Legendrian contact homology by a concordance  and the augmentation category of $\Lambda$ to show that no Lagrangian concordance exists in the other direction. This proves that the relation of Lagrangian concordance is not symmetric.
\end{abstract}

\section{Introduction.}
\label{sec:introduction}
In this paper we will only consider the standard contact $\mathbb{R}^3$ with the contact structure $\xi=\ker\alpha$ with $\alpha=dz-ydx$. A Legendrian knot is an embedding $i:S^1\hookrightarrow\mathbb{R}^3$ such that $i^*\alpha=0$. The symplectisation of $(\mathbb{R}^3,\xi)$ is the symplectic manifold $(\mathbb{R}\times\mathbb{R}^3,d(e^t\alpha))$. 

In \cite{chantraine_conc} we introduced the notion of Lagrangian concordances and cobordisms between Legendrian knots and proved the basic properties of those relations. Roughly speaking a Lagrangian cobordism $\Sigma$ from a knot $\Lambda^-$ to a knot $\Lambda^+$ is a Lagrangian submanifold of the symplectisation which coincides at $-\infty$ with $\Lambda^-$ and at $+\infty$ with $\Lambda^+$. When $\Sigma$ is topologically a cylinder we say that $\Lambda^-$ is Lagrangian concordant to $\Lambda^+$ (a relation we denote by $\Lambda^-\prec \Lambda^+$). Among the basic properties of oriented Lagrangian cobordisms we proved that $tb(\Lambda^+)-tb(\Lambda^-)=2g(\Sigma)$ where $tb(\Lambda)$ is the Thurston-Bennequin number of  $\Lambda$. This immediately implies that when a Lagrangian cobordism is not a cylinder then such a cobordism cannot be reversed. However we cannot apply such an argument to explicitly prove that the relation of concordance is not symmetric. In this paper we use more involved techniques, in particular recent results of T. Ekholm, K. Honda and T. K\'alm\'an in \cite{Ekhoka} using pseudo-holomorphic curves and Legendrian contact homology, to give an example of a non reversible Lagrangian concordance. Namely we prove that:

\begin{theorem}\label{thm:principal}
Let $\Lambda_0$ be the Legendrian unknot with $-1$ Thurston-Bennequin invariant.
  There exists a Legendrian representative $\Lambda$ of the knot $m(9_{46})$ of Rolfsen table of knots (see \cite{Rolfsen_Knots}) such that: 
  \begin{itemize}
  \item $\Lambda_0\prec\Lambda$
\item $\Lambda\not\prec\Lambda_0$.
  \end{itemize}
\end{theorem}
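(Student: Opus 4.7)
The proof naturally splits into two parts: producing an explicit concordance $\Lambda_0 \prec \Lambda$, and obstructing any concordance in the opposite direction.

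For the construction, since a concordance is topologically a cylinder, the formula $tb(\Lambda^+) - tb(\Lambda^-) = 2g(\Sigma)$ recalled in the introduction forces $tb(\Lambda) = -1$. I would therefore first exhibit an explicit front diagram of a Legendrian representative $\Lambda$ of $m(9_{46})$ realising $tb = -1$ (such representatives are standard in the Legendrian knot atlas). I would then build the concordance by writing $\Lambda$ as the result of applying to $\Lambda_0$ a sequence of elementary moves that lift to exact Lagrangian cobordism pieces: Legendrian isotopies (which give trivial cylinders) and Legendrian ``pinch'' moves at contractible Reeb chords (which contribute saddle-type handles). Arranging these moves so that the total Euler characteristic is zero gives a decomposable exact Lagrangian that is a cylinder, hence the desired concordance.

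For the obstruction, assume toward contradiction that a concordance $\Sigma$ from $\Lambda$ to $\Lambda_0$ exists. By the functoriality of Ekholm--Honda--K\'alm\'an \cite{Ekhoka}, $\Sigma$ induces a unital DGA morphism $\Phi_\Sigma \co \mathcal{A}(\Lambda_0) \to \mathcal{A}(\Lambda)$. The DGA of $\Lambda_0$ has a single degree-$1$ generator with trivial $\mathbb{Z}/2$-differential and no degree-$0$ generator, so it admits a unique augmentation $\varepsilon_0$; composition yields an augmentation $\varepsilon := \varepsilon_0 \circ \Phi_\Sigma$ of $\mathcal{A}(\Lambda)$. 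The naturality of the augmentation category under concordance then constrains the Hom-spaces around $\varepsilon$ in $\mathcal{A}ug(\Lambda)$ by those of $\mathcal{A}ug(\Lambda_0)$---concretely, the bilinearized complexes involving $\varepsilon$ should fit into a long exact sequence governed by $H^*(S^1)$, reflecting the topology of the cylindrical concordance.

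The plan is then to compute the Chekanov--Eliashberg DGA of $\Lambda$ explicitly, enumerate its augmentations up to DGA-homotopy, and show that no augmentation of $\Lambda$ is compatible with these constraints. The main obstacle is precisely this explicit computation: choosing a convenient Lagrangian projection of $\Lambda$, counting all relevant holomorphic disks to determine the differential, classifying augmentations, and computing the bilinearized homologies needed to detect the obstruction. The choice of $m(9_{46})$ is presumably made because it is the simplest knot whose augmentation category is rich enough to make this obstruction visible, while still admitting a $tb = -1$ Legendrian representative needed for the forward concordance.
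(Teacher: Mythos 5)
Your construction of the forward concordance matches the paper: a saddle (pinch) move applied to $\Lambda_0$ at a contractible chord, followed by Legendrian isotopies, with total Euler characteristic zero, is exactly how the concordance of Section \ref{sec:non-triv-conc} is built. The obstruction half, however, contains a genuine gap, together with a direction error in the setup. The small point first: a concordance $\Sigma$ from $\Lambda$ to $\Lambda_0$ induces $\Phi_\Sigma\colon\mathcal{A}(\Lambda_0)\to\mathcal{A}(\Lambda)$, so the composition $\varepsilon_0\circ\Phi_\Sigma$ does not typecheck and does not produce an augmentation of $\mathcal{A}(\Lambda)$. The augmentation $\varepsilon_1$ of $\mathcal{A}(\Lambda)$ that the paper uses is pulled back along the map $\varphi_C\colon\mathcal{A}(\Lambda)\to\mathcal{A}(\Lambda_0)$ induced by the \emph{forward} concordance $C$, not by the hypothetical backward one.

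The serious gap is that your proposed endgame cannot produce a contradiction. The knot $\Lambda$ is Lagrangian slice and genuinely receives a concordance from $\Lambda_0$, so $\mathcal{A}(\Lambda)$ does admit augmentations (two inequivalent ones, in fact), and the linearised LCH with respect to $\varepsilon_1$ is isomorphic to that of the unknot (one generator in degree $1$, nothing in degrees $0$ and $-1$). Hence any constraint phrased purely in terms of linearised or bilinearised homology groups ``fitting an exact sequence governed by $H^*(S^1)$'' is satisfied, and your plan of ``showing that no augmentation of $\Lambda$ is compatible'' terminates without a contradiction. Two further ingredients are needed. (i) One must concatenate the hypothetical $C'$ with the explicit $C$ to get a self-concordance of $\Lambda_0$ and invoke the Eliashberg--Polterovich unknottedness of Lagrangian cylinders over the unknot (Theorems \ref{thm:lagunknot} and \ref{thm:concmaptriv}) to force $\varphi_C\circ\varphi_{C'}=Id$; without this, the composite could a priori be the trivial unital endomorphism $a_0\mapsto 0$ of $\mathcal{A}(\Lambda_0)$ and nothing follows from functoriality alone. (ii) The contradiction is extracted not from the bilinearised groups but from the product $\mu^2$ in the augmentation category: the computation $\mu^2_{\varepsilon_1,\varepsilon_2,\varepsilon_1}(a_5,c_2)=a_2$ exhibits the generator $[a_2]$ of $LCH^1_{\varepsilon_1}(\Lambda)$, which must map onto $[a_0]\neq 0$, as a product with a factor $[c_2]$ that is forced into $LCH^{-1}_{\varepsilon_0}(\Lambda_0)=\{0\}$. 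Your appeal to the augmentation category is in the right spirit, but without steps (i) and (ii) the argument does not close.
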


The front and Lagrangian projections of $\Lambda$ in the previous theorem are shown on Figure \ref{fig:frontlag} (note that this Legendrian knot also appears in the end of \cite{Sivek}  as an example of Lagrangian slice knot). 
\begin{figure}[!h]
  \centering
  \includegraphics[height=7cm]{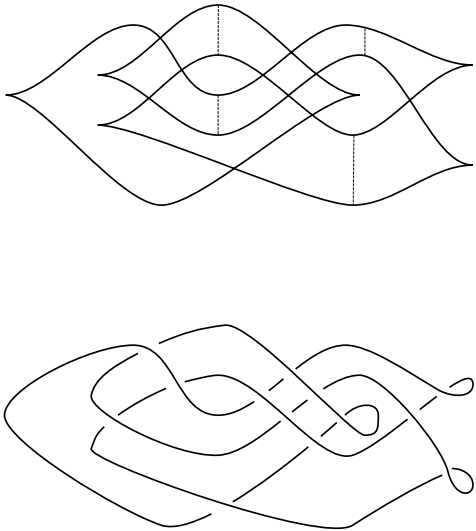}
  \caption{Front and Lagrangian projections of a Legendrian representative of $m(9_{46})$.}
  \label{fig:frontlag}
\end{figure}

This example confirms the analogy of this relation with a partial order. Whether or not it is a genuine partial order (meaning that $\Lambda\prec\Lambda'$ and $\Lambda'\prec \Lambda$ would imply that $\Lambda$ is Legendrian isotopic to $\Lambda'$) is neither proved nor disproved; the author is unaware of any conjecture on how different the equivalence relation given by $\Lambda\prec\Lambda'$ and $\Lambda'\prec \Lambda$ is from the Legendrian isotopy relation.

The knot $\Lambda$ is the ``smallest'' Lagrangianly slice Legendrian knot (as it is clear from the Legendrian knot atlas of \cite{atlas}); it is therefore the first natural candidate fo an example a non-reversible concordance. Using connected sums it is possible to construct more examples of this kind. Another class of examples in dimension $3$ will appear in forthcoming work by J. Baldwin and S. Sivek in \cite{SivekBaldwin} where they construct concordances where the negative ends are stabilisations and the positive ones have non-vanishing Legendrian contact homology. In higher dimensions recent results of Y. Eliashberg and E. Murphy \cite{Eliashmurphy} imply that if the negative end is loose (in the sense of \cite{Murphy}) then the Lagrangian concordance problem satisfies the h-principle. This can be used to prove further non reversible examples of Lagrangian concordances. Note that in both of those cases we still need pseudo-holomorphic curves techniques and the existence of maps in Legendrian contact homology to prove that the involved Lagrangian concordances cannot be reversed.

In order to prove the existence of the Lagrangian concordance claimed in Theorem \ref{thm:principal} we use elementary Lagrangian cobordisms from \cite{collarable} which we recall in Section \ref{sec:elem-lagr-cobord}. We also describe those elementary cobordisms in terms of Lagrangian projections as we will use those in Section \ref{sec:legendr-cont-homol} to compute maps between Legendrian contact homology algebras (LCH for short). As the negative end of the concordance is $\Lambda_0$ which has non-vanishing LCH the actual argument not only relies on the functoriality of Legendrian contact homology (as it is the case for the example of \cite{Eliashmurphy} and \cite{SivekBaldwin}) but also on a unknottedness result of Lagrangian concordances from $\Lambda_0$ to itself which follows from work of Y. Eliashberg and L. Polterovitch in \cite{Eliashberg_&_Local_Lagrangian_knots} which we state in the following:

\begin{theorem}\label{thm:lagunknot}
Consider the standard contact $S^3$ (seen as the compactification of the standard contact $\mathbb{R}^3$) and denote by $K_0$ the Legendrian unknot with $-1$ Thurston-Bennequin invariant (which corresponds to $\Lambda_0$ in $\mathbb{R}^3$).

  Let $C$ be an oriented Lagrangian cobordism from $K_0$ to itself. Then there is a compactly supported symplectomorphism of $\mathbb{R}\times S^3$ such that $\phi(C)=\mathbb{R}\times K_0$.
\end{theorem}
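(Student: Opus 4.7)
My strategy would be to cap off the negative end of $C$ with the standard Lagrangian disk bounding $K_0$ in the four-ball, obtaining a proper Lagrangian plane in $\C^2$ which is standard outside a compact set, and then invoke the local triviality result of Eliashberg and Polterovich \cite{Eliashberg_&_Local_Lagrangian_knots}. As a first observation, the adjunction-type formula $tb(\Lambda^+)-tb(\Lambda^-)=2g(C)$ from \cite{chantraine_conc} forces $C$ to be an embedded cylinder.

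I would then use the Liouville identification $(t,p)\mapsto e^{t/2}p$ to view $(\R\times S^3,d(e^t\alpha))$ as $(\C^2\setminus\{0\},\omega_{\mathrm{std}})$, under which $\R\times K_0$ corresponds to the punctured real plane $\R^2\setminus\{0\}$. Because $C$ is cylindrical over $K_0$ near $t=-\infty$, its image under this identification agrees with $\R^2\setminus\{0\}$ in a punctured neighbourhood of $0\in\C^2$, so $\widetilde C := C\cup\{0\}$ is a smooth, properly embedded Lagrangian plane in $\C^2$ which coincides with the standard plane $\R^2$ both in a small ball around $0$ and outside a sufficiently large ball.

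The main input is then Eliashberg and Polterovich's theorem: any proper Lagrangian embedding of $\R^2$ in $\C^2$ that agrees with the standard plane outside a compact set is carried onto the standard plane by a compactly supported symplectomorphism. Applied to $\widetilde C$ this provides a compactly supported symplectomorphism $\Phi$ of $\C^2$ with $\Phi(\widetilde C)=\R^2$. The main obstacle I anticipate is that such a $\Phi$ need not be the identity near $0$, whereas to obtain a symplectomorphism of the symplectization $\R\times S^3\cong\C^2\setminus\{0\}$ with compact support, $\Phi$ must fix a neighbourhood of the origin. To remedy this I would use that near $0$ both $\widetilde C$ and $\R^2$ already coincide with the standard Lagrangian plane, so the germ of $\Phi$ at $0$ is a symplectomorphism preserving this germ; by Weinstein's Lagrangian neighbourhood theorem together with a Hamiltonian cutoff, one can compose $\Phi$ with an additional compactly supported symplectomorphism to make it equal to the identity in some small ball around $0$. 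Restricting to $\C^2\setminus\{0\}$ then yields the required compactly supported symplectomorphism of $\R\times S^3$ sending $C$ to $\R\times K_0$.
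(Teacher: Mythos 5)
Your proposal follows essentially the same route as the paper: deduce from $tb(\Lambda^+)-tb(\Lambda^-)=2g(C)$ that $C$ is a cylinder, identify $(\R\times S^3,d(e^t\alpha))$ with $\C^2\setminus\{0\}$ so that adding the origin turns $C$ into a local Lagrangian knot, apply the Eliashberg--Polterovich theorem, and then correct the resulting compactly supported symplectomorphism so that it is the identity near $0$ before restricting to $\C^2\setminus\{0\}$. The only place where the paper is more careful is the last normalization step: since the Eliashberg--Polterovich map need not fix the origin (it only sends the small standard disk somewhere into $\R^2$), the paper first composes with a compactly supported Hamiltonian extension of a plane diffeomorphism returning that disk to itself, and then applies Moser's method in a small ball --- your ``Weinstein neighbourhood plus Hamiltonian cutoff'' sketch is the same idea, just slightly underspecified.
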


Theorem \ref{thm:lagunknot} is proven in Section \ref{sec:lagr-conc-from}. Assuming then that a concordance $C'$ from $\Lambda$ to $\Lambda_0$ exists we could glue $C$ to $C'$ to get a concordance from $\Lambda_0$ to $\Lambda_0$ and applying  Theorem \ref{thm:lagunknot} we deduce that the map induced in Legendrian contact homology is the identity (as stated in Theorem \ref{thm:concmaptriv}). We conclude the proof of Theorem \ref{thm:principal} in Section \ref{sec:non-symetry-th}. In order to do so, we use the augmentation categories of $\Lambda$ and $\Lambda_0$ as defined in \cite{augcat} and the functor between them induced by the concordance to find a contradiction to the existence of a concordance from $\Lambda$ to $\Lambda_0$. 

\begin{Rem}
The main result was announced in the addendum in the introduction of \cite{chantraine_conc}. When it was written bilinearised LCH was not known to the author. The original proof of the non-symmetry followed however similar lines. The idea is to construct several other concordances $C_i$ from $\Lambda_0$ to $\Lambda$ (every dashed line in Figure \ref{fig:frontlag} is a chord where we can apply move number $4$ of Figure \ref{fig:localmove} to get such a concordance). For each of those we computed the associated map similarly to what is done in Section \ref{sec:legendr-cont-homol}. We then used Theorem \ref{thm:concmaptriv} to prove that for each of them the composite map in Legendrian contact homology is the identity and deduce after some effort a contradiction. The existence of the augmentation category allows us to give a more direct final argument and use only one explicit concordance from $\Lambda_0$ to $\Lambda$. 
\end{Rem}

{\bf Acknowledgements.} Most of this work was done while the author was supported first by a post-doctoral fellowship and after by a Mandat Charg\'e de Recherche from the Fonds de la Recherche Scientifique (FRS-FNRS), Belgium. I wish to thank both the FNRS and the mathematics department of the Universit\'e Libre de Bruxelles for the wonderful work environment they provided. I also thank two anonymous referees whose comments and suggestions improved the exposition of the paper.

\section{Lagrangian concordances and Legendrian contact homology.}
\label{sec:defin-prev-results}

We recall in this section the main definition from \cite{chantraine_conc}. 

\begin{defn}\label{def:lagconc}
  Let $\Lambda^-:S^1\hookrightarrow\mathbb{R}^3$ and $\Lambda^+:S^1\hookrightarrow\mathbb{R}^3$ be two Legendrian knots in $\mathbb{R}^3$. We say that $\Lambda^-$ is \textit{Lagrangian concordant} to $\Lambda^+$ if there exists a Lagrangian embedding $C:\mathbb{R}\times\Lambda\hookrightarrow\mathbb{R}\times\mathbb{R}^3$ such that 
  \begin{enumerate}
  \item $C\vert_{(-\infty,-T)\times\Lambda}=Id\times\Lambda^-$.
\item $C\vert_{(T,\infty)\times\Lambda}=Id\times\Lambda^+$.
  \end{enumerate}
In this situation $C$ is called a \textit{Lagrangian concordance} from $\Lambda^-$ to $\Lambda^+$.
\end{defn}

It was proven in \cite{findimlagint} that two Legendrian isotopic Legendrian knots are indeed Lagrangian concordant. Another proof is given in \cite{chantraine_conc} where we also proved that under Lagrangian concordances the classical invariants $tb$ and $r$ are preserved.

A Lagrangian concordance $C$ is always an exact Lagrangian submanifold of $\mathbb{R}\times\mathbb{R}^3$ in the sense of \cite{Ekhoka} and thus following \cite{Ekhoka} it defines a DGA-map $$\varphi_C:\mathcal{A}(\Lambda^+)\rightarrow\mathcal{A}(\Lambda^-)$$ where $\mathcal{A}(\Lambda^\pm)$ denote the Chekanov algebras of the Legendrian submanifolds $\Lambda^\pm$. The homology of $\mathcal{A}(\Lambda)$ (denoted by $LCH(\Lambda)$) is called the Legendrian contact homology of $\Lambda$ (see \cite{Chekanov_DGA_Legendrian} and \cite{Ekholm_Contact_Homology}). This map is defined by a count of pseudo-holomorphic curves with boundary on $C$. 

If $C_1$ is a Lagrangian concordance from $\Lambda_0$ to $\Lambda_1$ and $C_2$ a Lagrangian concordance from $\Lambda_1$ to $\Lambda_2$. We denote by $C_1\#_T C_2$ the Lagrangian concordance from $\Lambda_0$ to $\Lambda_2$ which is equal to a translation of $C_1$ for $t<-T$ and a translation of $C_2$ for $t>T$. Then \cite[Theorem 1.2]{Ekhoka} implies that there exists a sufficiently big $T$ such that $\varphi_{C_1\#_T C_2}=\varphi_{C_1}\circ \varphi_{C_2}$, in particular the association $C\rightarrow \varphi_C$ is functorial on LCH.

\section{Elementary Lagrangian cobordisms and their Lagrangian projections.}
\label{sec:elem-lagr-cobord}
For a Legendrian knot $\Lambda$ in $\mathbb{R}^3$ we call the projection of $\Lambda$ on the $xz$-plane along the $y$ direction the \textit{front projection} of $\Lambda$. The projection on the $xy$ plane along the $z$ direction is called the \textit{Lagrangian projection} of $\Lambda$.

In order to produce an example of a non-trivial Lagrangian concordance we will use a sequence of elementary cobordisms as defined in \cite{collarable} and \cite{Ekhoka}. A combination of results from \cite{chantraine_conc}, \cite{collarable} and \cite{Ekhoka} implies that the local moves of Figure \ref{fig:localmove} can be realised by Lagrangian cobordisms (the arrows indicate the increasing $\mathbb{R}$ direction in $\mathbb{R}\times\mathbb{R}^3$). 

\begin{figure}[!h]
\labellist
\small\hair 2pt
\pinlabel {$1$} [bl] at 174 620
\pinlabel {$1'$} [bl] at 398 620
\pinlabel {$2$} [bl] at 113 491
\pinlabel {$2'$} [bl] at 442 490
\pinlabel {$3$} [bl] at 288 347
\pinlabel {$4$} [bl] at 288 179
\pinlabel {$5$} [bl] at 288 36
\pinlabel {$\emptyset$} [bl] at  88 15
\endlabellist
  \centering
  \includegraphics[height=7cm]{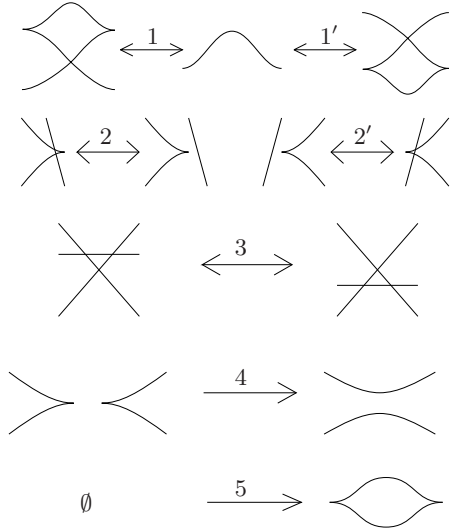}
  \caption{Local bifurcations of fronts along elementary Lagrangian cobordisms.}
  \label{fig:localmove}
\end{figure}

The first three moves are Legendrian Reidemeister moves arising along generic Legendrian isotopies, in each case the associated cobordism is a concordance. The fourth move is a saddle cobordism which corresponds to a $1$-handle attachment. The cobordism corresponding to the fifth move is a disk.

In Section \ref{sec:legendr-cont-homol} we will compute the induced map in Legendrian contact homology by a concordance. It will then be convenient to have a description of this concordance in terms of the Lagrangian projection. As it is easier in general to draw isotopy of front projections, we will use procedure of \cite{Ngcomputable} to draw Lagrangian projections from front projections.

The idea is to write front projections in piecewise linear forms where the slope of a strand is always bigger than the one under it except before a crossing or a cusp. Such front diagrams are then easily translated into Lagrangian projections. 

In Figure \ref{fig:lagsimplemove} we provide, on the left, the elementary moves in front diagrams of this form associated to elementary cobordisms which we translate then, on the right,  in terms of Lagrangian projections. As in Figure \ref{fig:localmove} the arrows represent the increasing time direction. 

\begin{figure}[!h]
\labellist
\small\hair 2pt
\pinlabel {$\mathbf{II^{-1}}$} [bl] at 450 754
\pinlabel {$\mathbf{II}$} [tl] at 450 724
\pinlabel {$\mathbf{II^{-1}}$} [bl] at 450 635
\pinlabel {$\mathbf{II}$} [tl] at 450 605
\pinlabel {$\mathbf{II^{-1}}$} [bl] at 450 509
\pinlabel {$\mathbf{II}$} [tl] at 450 479
\pinlabel {$\mathbf{II^{-1}\circ III'}$} [bl] at 410 370
\pinlabel {$\mathbf{III'\circ II}$} [tl] at 410 332
\pinlabel {$\mathbf{III'}$} [bl] at 450 251
\pinlabel {$\mathbf{III'}$} [tl] at 450 214
\pinlabel {$\mathbf{II^{-1}\circ IV}$} [bl] at 410 156
\pinlabel {$\mathbf{V}$} [bl] at 450 27
\pinlabel {$\emptyset$} [bl] at 360 15
\pinlabel {$\emptyset$} [bl] at 50 15
\endlabellist
  \centering
  \includegraphics[height=10cm]{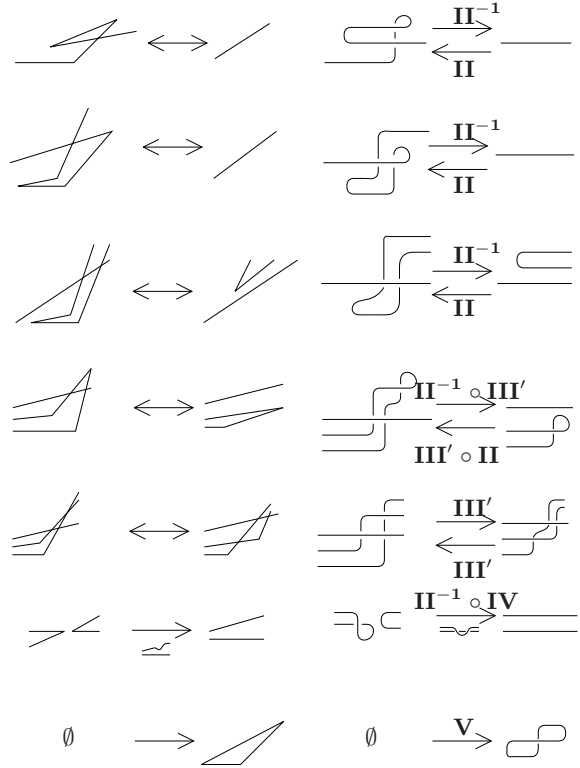}
  \caption{Lagrangian projections of elementary cobordisms.}
  \label{fig:lagsimplemove}
\end{figure}

We label an arrow according to the corresponding bifurcation of the Lagrangian projection where $\mathbf{II}$, $\mathbf{III}$ and $\mathbf{III'}$ correspond to the notation of \cite{Kalman_One_parameter_family}. However, as a cobordism from $\Lambda^-$ to $\Lambda^+$ induce a map from $\mathcal{A}(\Lambda^+)$ to $\mathcal{A}(\Lambda^-)$ (i.e. following the decreasing time direction) we labelled a move in Figure \ref{fig:lagsimplemove} by the corresponding move from \cite{Kalman_One_parameter_family} following the arrow backward. As an example, if $\Lambda^-$ differs from $\Lambda^+$ by a move number $\mathbf{II}$ from \cite{Kalman_One_parameter_family} we will label the arrow by a $\mathbf{II}^{-1}$ as it is this move we will use to compute the map from $\mathcal{A}(\Lambda^+)$ to $\mathcal{A}(\Lambda^-)$. We denote by $\mathbf{IV}$ the saddle cobordism denoted $L_{sa}$ in \cite{Ekhoka} and by $\mathbf{V}$ the Lagrangian filling of $\Lambda_0$ denoted by $L_{mi}$ in \cite{Ekhoka}. In move number $4$, we also provide an intermediate step which corresponds to the creation of two Reeb chords one of which being then resolved by the cobordism (this procedure guaranties that the smallest newly created chord is contractible). 

This language being understood we will be able to translate any bifurcation of fronts as a bifurcation of Lagrangian projections and we will keep drawing qualitative Lagrangian projections.

\section{Example of a non-trivial concordance.}
\label{sec:non-triv-conc}

Using the moves of Figure \ref{fig:localmove} we are able to provide a non trivial Lagrangian concordance from $\Lambda_0$ to $\Lambda$. Note that the knot $m(9_{46})$ is the first Legendrian knot in the Legendrian knot atlas of \cite{atlas} with $$g_s(K)=0 \text{ and } \max\{tb(\Lambda)\vert \Lambda\text{ Legendrian representative of } K\}=-1,$$ thus, following \cite[Theorem 1.4]{chantraine_conc}, it is the simplest candidate for such an example. The bifurcations of the fronts along the non trivial concordance is given on Figure \ref{fig:notriv}.

\begin{figure}[hftp]
\labellist
\small\hair 2pt
\pinlabel{$4$} [bl] at 275 436
\pinlabel{$2\circ 2'\circ 2'$} [bl] at 180 339
\pinlabel{$1\circ 1$} [bl] at 265 272
\pinlabel{$2'$} [bl] at 245 166
\pinlabel{$5$} [bl] at 275 84
\endlabellist
  \centering
  \includegraphics[height=7cm]{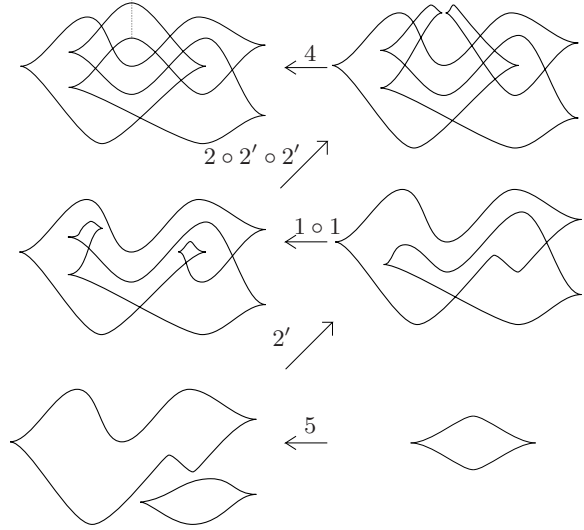}
  \caption{A non trivial Lagrangian concordance.}\label{fig:notriv}
\end{figure}

One can see that it is indeed a concordance either by using \cite[Theorem 1.3]{chantraine_conc} and deduce from $tb(\Lambda)-tb(\Lambda_0)=0$ that the genus of the cobordism is $0$ or by explictly seeing that the projection to $\mathbb{R}$ of $C$ has only two critical points, one of index $1$ and one of index $0$ which implies that $C$ is a cylinder.

\section{Legendrian contact homology of $\Lambda$ and some geometrical maps.}
\label{sec:legendr-cont-homol}

We compute now the boundary operator on the Chekanov algebra of $\Lambda$ (see \cite{Chekanov_DGA_Legendrian}). As $r(\Lambda)=0$ it is a differential $\mathbb{Z}$-graded algebra over $\mathbb{Z}_2$ freely generated by the double points of the Lagrangian projection of $\Lambda$. The generators of $\mathcal{A}(\Lambda)$ are represented on Figure \ref{fig:LCHlag} where each $a_i$ has degree $1$, each $b_i$ degree $0$ and each $c_i$ degree $-1$.

\begin{figure}[hftp]
\labellist
\small\hair 2pt
\pinlabel {$a_1$} [bl] at 515 173
\pinlabel {$a_2$} [bl] at 531 76
\pinlabel {$a_3$} [bl] at 397 144
\pinlabel {$b_6$} [bl] at 482 150
\pinlabel {$a_4$} [bl] at 366 175
\pinlabel {$b_5$} [bl] at 362 115
\pinlabel {$b_3$} [bl] at 338 199
\pinlabel {$b_4$} [bl] at 326 145
\pinlabel {$c_1$} [bl] at 303 168
\pinlabel {$c_2$} [bl] at 258 40
\pinlabel {$a_5$} [bl] at 207 180
\pinlabel {$b_1$} [bl] at 158 230
\pinlabel {$b_2$} [bl] at 116 138
\endlabellist  
  \centering
  \includegraphics[height=4cm]{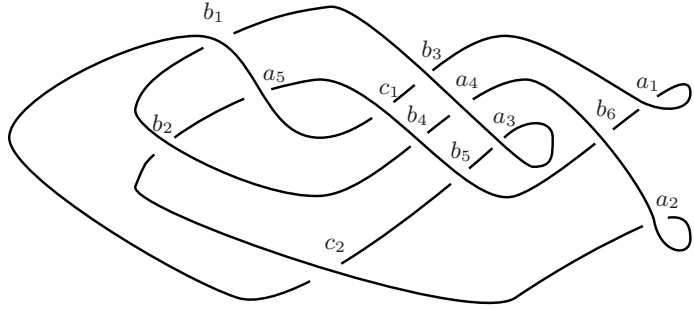}
  \caption{Generators of $\mathcal{A}(\Lambda)$.}
  \label{fig:LCHlag}
\end{figure}

The boundary operator on generators counts degree one immersed polygons with one positive corner and several negative corners and in our situation gives:
\begin{align*}
  \partial a_1&=1+a_5c_2b_2+b_1b_6+b_2\\
\partial a_2&=1+b_2c_2a_4b_2+b_2c_2b_3a_5+b_6b_4b_2+b_6c_1a_5+b_6+b_2\\
\partial a_3&=1+a_4b_2c_2+b_3a_5c_2+b_3+b_2b_5\\
\partial a_4&=1+b_3b_1+b_2b_4\\
\partial a_5&=b_1b_2\\
\partial b_1=\partial b_2&=0\\
\partial b_3&=b_2c_1\\
\partial b_4&=c_1b_1\\
\partial b_5&=b_4b_2c_2+c_1a_5c_2+c_2+c_1\\
\partial b_6&=b_2c_2b_2\\
\partial c_1=\partial c_2&=0.
\end{align*}

It is then extended to the whole algebra by Leibniz' rule: $\partial(ab)=\partial(a)b+a\partial(b)$.

We will now compute the map between Chekanov algebras  associated to the concordance $C$ of Figure \ref{fig:notriv}. At each step we use the results of \cite{Ekhoka} which give a combinatorial description of the map associated to each elementary cobordism. 

\begin{figure}[hftp]
\labellist
\small\hair 2pt
\pinlabel{$C_1$} [bl] at 285 745
\pinlabel{$C_2$} [br] at 290 650
\pinlabel{$C_3$} [bl] at 285 540
\pinlabel{$C_4$} [br] at 290 410
\pinlabel{$C_5$} [bl] at 285 311
\pinlabel{$C_6$} [br] at 290 181
\pinlabel{$C_7$} [bl] at 325 70
\endlabellist
  \centering
  \includegraphics[height=9cm]{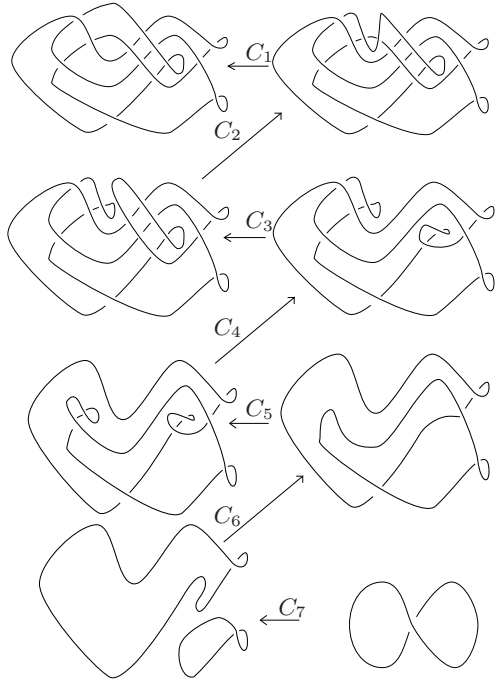}
  \caption{Bifurcations of Lagrangian projections along the non-trivial concordance.}\label{fig:notrivlag}
\end{figure}

On Figure \ref{fig:notrivlag} we see the bifurcations of the Lagrangian projections along $C$ using the correspondence between front moves and Lagrangian moves of Figure \ref{fig:lagsimplemove}, for convenience we split the first two steps in two steps each. For a cobordism $C_i$ we denote the differential of the DGA associated to the upper level by $\partial^+_{C_i}$ and the one corresponding to the lower level by $\partial^-_{C_i}$ (of course $\partial^+_{C_{i+1}}=\partial^-_{C_i}$).  At each step we compute the map associated to these moves between the corresponding Chekanov algebras heavily using the results of \cite[Section 6]{Ekhoka}. We provide the precise section of this paper we use for each of the corresponding move. We decorate the labels of the bifurcations of the Lagrangian projections with subscripts precising the chords involved by each move.

\subsection{Map associated to $C_1$.}
\label{sec:map-associated-c_1}

The bifurcation associated to the cobordism $C_1$ is $\mathbf{II_{ab}}$ as in Figure \ref{fig:aug11}. The computation of the map associated to this move is the most involved of all the DGA maps described in \cite{Kalman_One_parameter_family} and \cite{Ekhoka}.

\begin{figure}[!h]
\labellist
\small\hair 2pt
\pinlabel {$b_5$} [bl] at 325 139
\pinlabel {$b_4$} [bl] at 300 176
\pinlabel {$a_1$} [bl] at 473 215
\pinlabel {$a_2$} [bl] at 485 95
\pinlabel {$b_6$} [bl] at 435 177
\pinlabel {$a_3$} [bl] at 362 171
\pinlabel {$a_4$} [bl] at  334 210
\pinlabel {$b_3$} [bl] at 306 240
\pinlabel {$c_1$} [bl] at 265 201
\pinlabel {$a_5$} [bl] at 150 205
\pinlabel {$c_2$} [bl] at 230 45
\pinlabel {$b_1$} [bl] at 146 266
\pinlabel {$b_2$} [bl] at 107 171 
\pinlabel {$b_5$} [bl] at 945 139
\pinlabel {$b_4$} [bl] at 920 176
\pinlabel {$a_1$} [bl] at 1098 208
\pinlabel {$a_2$} [bl] at 1105 95
\pinlabel {$b_6$} [bl] at 1060 177
\pinlabel {$a_3$} [bl] at 987 171
\pinlabel {$a_4$} [bl] at  959 210
\pinlabel {$b_3$} [bl] at 931 240
\pinlabel {$c_1$} [bl] at 890 201
\pinlabel {$a_5$} [bl] at 770 205
\pinlabel {$c_2$} [bl] at 855 45
\pinlabel {$b_1$} [bl] at 771 266
\pinlabel {$b_2$} [bl] at 732 171 
\pinlabel {$a$} [bl] at 815 215
\pinlabel {$b$} [bl] at 855 222
\endlabellist  
\centering
  \includegraphics[width=12cm]{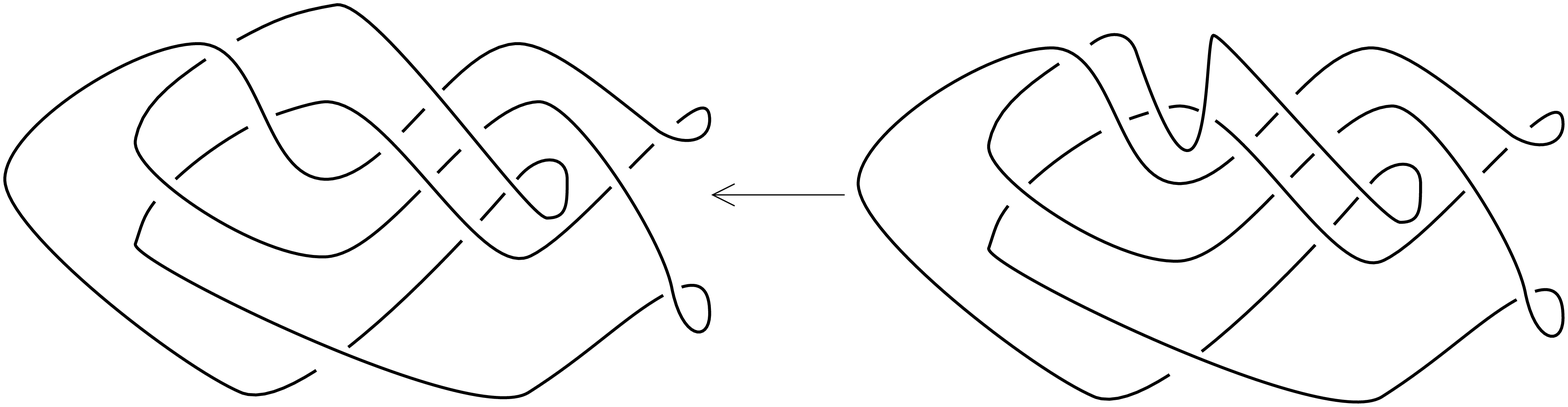}
        \caption{$\mathbf{II_{ab}}$.}
\label{fig:aug11}
\end{figure}

Following \cite[Section 6.3.4]{Ekhoka}, in order to compute $\varphi_{C_1}$ we need first to know $\partial_{C_1}^-$. We have:
\begin{align*}
  \partial_{C_1}^-a_1&=1+a_5c_2b+b_1b_6+b\\
\partial_{C_1}^-a_2&=1+b_2c_2a_4b_2+b_2c_2b_3a_5+b_6b_4b_2+b_6c_1a_5+b_6c_2a+b_6+b_2\\
\partial_{C_1}^-a_3&=1+a_4b_2c_2+b_3a_5c_2+b_3+bb_5+ac_2\\
\partial_{C_1}^-a_4&=1+b_3b_1+bb_4\\
\partial_{C_1}^-a_5&=b_1b_2\\
\partial_{C_1}^-b_1=\partial_{C_1}^-b_2&=0\\
\partial_{C_1}^-b_3&=bc_1\\
\partial_{C_1}^-b_4&=c_1b_1\\
\partial_{C_1}^-b_5&=b_4b_2c_2+c_1a_5c_2+c_2+c_1\\
\partial_{C_1}^-b_6&=b_2c_2b\\
\partial_{C_1}^-c_1=\partial_{C_1}^-c_2&=0\\
\partial_{C_1}^-a&=b+b_2\\
\partial_{C_1}^-b&=0.
\end{align*}

Which we compare to $\partial_{C_1}^+$ computed above which gave
\begin{align*}
  \partial^+_{C_1} a_1&=1+a_5c_2b_2+b_1b_6+b_2\\
\partial^+_{C_1} a_2&=1+b_2c_2a_4b_2+b_2c_2b_3a_5+b_6b_4b_2+b_6c_1a_5+b_6+b_2\\
\partial^+_{C_1} a_3&=1+a_4b_2c_2+b_3a_5c_2+b_3+b_2b_5\\
\partial^+_{C_1} a_4&=1+b_3b_1+b_2b_4\\
\partial^+_{C_1} a_5&=b_1b_2\\
\partial^+_{C_1} b_1=\partial^+_{C_1} b_2&=0\\
\partial^+_{C_1} b_3&=b_2c_1\\
\partial^+_{C_1} b_4&=c_1b_1\\
\partial^+_{C_1} b_5&=b_4b_2c_2+c_1a_5c_2+c_2+c_1\\
\partial^+_{C_1} b_6&=b_2c_2b_2\\
\partial^+_{C_1} c_1=\partial^+_{C_1} c_2&=0.
\end{align*}

A priori, in order to compute the associated map $\varphi_{C_1}$ we need to order the Reeb chord according to the length filtration (see \cite[Section 3.1]{Kalman_One_parameter_family} and \cite[Section 6.3.4]{Ekhoka}). This ensure that when computing $\varphi_{C_1}(a)$ we already know the image by $\varphi_{C_1}$ of any letter appearing in $\partial^-_{C_1}(a)$. But we actually do not need to understand the whole filtration in a concrete example. For this note that for any generator $d$ of $\mathcal{A}(\Lambda^+)$ if $b$ is not a letter appearing in $\partial^-_{C_1}(d)$ then $\varphi_{C_1}(d)=d$ regardless of its action.  Thus in the end we need to understand the filtration on $a_1$, $a_3$, $a_4$, $b_3$ and $b_6$. One easily see that the action of $a_1$ can be made as big as we want without changing any other action. Then from the fact that $\partial^\pm$ decreases the action one get that $h(a_1)>h(a_3)>h(a_4)>h(b_3)$ and that $h(a_1)>h(b_6)$. This is enough to proceed with inductive process (as $b_6$ only appears in $\partial(a_1)$ we treat it as having action greater than $a_3$).

Also  note that $\partial^-_{C_1}(a)=b=b+0$ which give $v=0$ (following the notation from \cite{Ekhoka}). 

We start with $b_3$ following the notation of \cite[Section 6.3.4]{Ekhoka} we need to write $\partial^-_{C_1}b_3=\sum B_1bB_2b\ldots B_kbA$ where all $B's$ are words with letters in the generator of $\mathcal{A}(\Lambda^+)$ (with lower action than $b_3$) and where every occurence of $b$ in $A$ follows an occurence of $a$. In our situation we have $\partial_{C_1}^-b_3=bc_1=bA$ with $A=c_1$ (and we have no word of type $B_i$). Thus $b_3$ is mapped to $b_3+aA=b_3+ac_1$. 

We then proceed for $a_4$, we get $\partial_{C_1}^-a_4=1+b_3b_1+bb_4=A_1+A_2+bA_3$ with $A_1=1$, $A_2=b_3b_1$ and $A_3=b_4$ (again no $B$'s). Only $A_3$ is of interest here (as it belongs to a monomial containing $b$) and implies that $a_4$ is mapped to $a_4+ab_4$.

For $a_3$ we have $\partial_{C_1}^-a_3=1+a_4b_2c_2+b_3a_5c_2+b_3+bb_5+ac_2$. The only relevant monomial is $bb_5$ implying that $a_3$ is mapped to $a_3+ab_5$.

As for $b_6$ we have $\partial_{C_1}^-b_6=b_2c_2b=Bb$. This implies that $b_6$ is mapped to $b_6+\varphi_{C_1}(B)a=b_6+b_2c_2a$.

Finally for $a_1$ we have $\partial_{C_1}^-a_1=1+a_5c_2b+b_1b_6+b= A_1+A_2b+A_3+A_4b$ with the only relevant $A_i$ being $A_2=a_5c_2$ and $A_4=1$ giving that $a_1$ is mapped to $a_1+a_5c_2a+a$.

In summary we have that $\varphi_{C_1}$ does the following:
\begin{align*}
a_1&\rightarrow a_1+a+a_5c_2a\\
a_3&\rightarrow a_3+ab_5\\
a_4&\rightarrow a_4+ab_4\\
  b_3&\rightarrow b_3+ac_1\\
b_6&\rightarrow b_6+b_2c_2a\\
\end{align*}

and all other generators are mapped to themselves.

\subsection{Map associated to $C_2$.}
\label{sec:map-associated-c_1-1}

The bifurcation associated to $C_2$ is of type $\mathbf{IV_b}$ using the notations of Figure \ref{fig:saddleaug1}.
\begin{figure}[!h]
\labellist
\small\hair 2pt
\pinlabel {$b_5$} [bl] at 331 139
\pinlabel {$b_4$} [bl] at 300 176
\pinlabel {$a_1$} [bl] at 473 215
\pinlabel {$a_2$} [bl] at 485 95
\pinlabel {$b_6$} [bl] at 435 177
\pinlabel {$a_3$} [bl] at 362 171
\pinlabel {$a_4$} [bl] at  334 210
\pinlabel {$b_3$} [bl] at 306 240
\pinlabel {$c_1$} [bl] at 271 201
\pinlabel {$a_5$} [bl] at 150 205
\pinlabel {$c_2$} [bl] at 230 45
\pinlabel {$b_1$} [bl] at 146 266
\pinlabel {$b_2$} [bl] at 112 171 
\pinlabel {$a$} [bl] at 190 215
\pinlabel {$b$} [bl] at 230 222
\pinlabel {$b_5$} [bl] at 950 139
\pinlabel {$b_4$} [bl] at 920 176
\pinlabel {$a_1$} [bl] at 1098 215
\pinlabel {$a_2$} [bl] at 1110 95
\pinlabel {$b_6$} [bl] at 1060 177
\pinlabel {$a_3$} [bl] at 987 171
\pinlabel {$a_4$} [bl] at  959 210
\pinlabel {$b_3$} [bl] at 931 240
\pinlabel {$c_1$} [bl] at 891 201
\pinlabel {$a_5$} [bl] at 770 205
\pinlabel {$c_2$} [bl] at 855 45
\pinlabel {$b_1$} [bl] at 771 266
\pinlabel {$b_2$} [bl] at 737 171 
\pinlabel {$a$} [bl] at 810 215
\endlabellist  
 \centering
  \includegraphics[width=12cm]{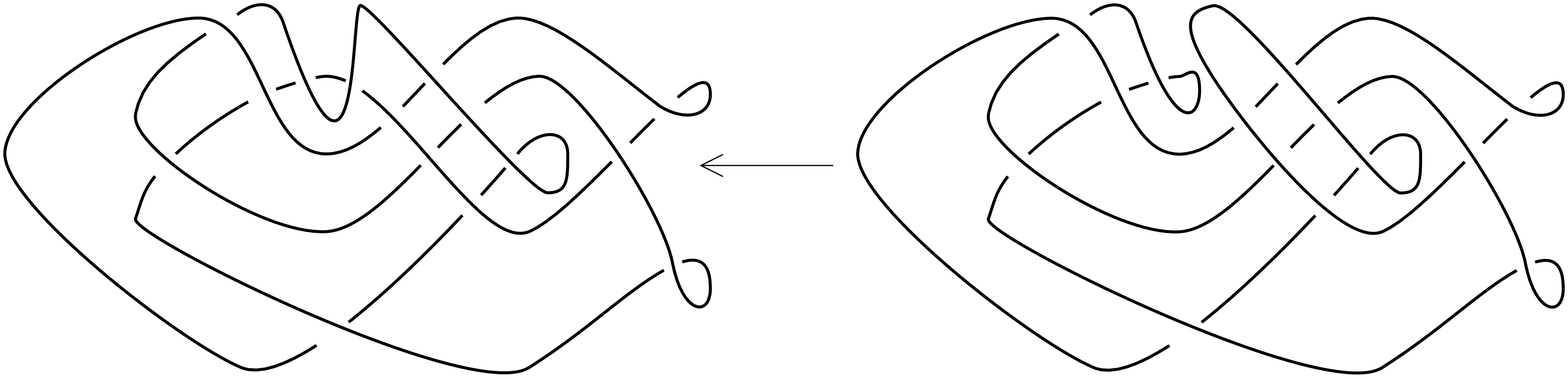}
  \caption{Saddle cobordism $\mathbf{IV_b}$.}
  \label{fig:saddleaug1}
\end{figure}

An easy verification shows that the contractible Reeb chord $b$ is simple (in the sense of \cite{Ekhoka}). We can thus apply \cite[Proposition 6.17]{Ekhoka} and count immersed polygons with two positive corners (one on $b$). We get only three of those (the $\pm$ superscripts design postive and negative corners of the polygons):
\begin{align*}
  a_2^+b_6^-b^+a^-\\
b_4^+b^+\\
b_5^+b^+a^-c_2^-.\\
\end{align*}

Which gives that the map $\varphi_{C_2}$ does the following:

\begin{align*}
  a_2&\rightarrow a_2+b_6a\\
b_4&\rightarrow b_4+1\\
b_5&\rightarrow b_5+ac_2\\
b&\rightarrow 1
\end{align*}

all other generators being mapped to themselves.

This changes the differential as follows:
\begin{align*}
 \partial_{C_2}^-a_1&=a_5c_2+b_1b_6\\
\partial_{C_2}^-a_2&=1+b_2c_2a_4b_2+b_2c_2b_3a_5+b_6b_4b_2+b_6c_1a_5+b_2\\
\partial_{C_2}^-a_3&=1+a_4b_2c_2+b_3a_5c_2+b_3+b_5\\
\partial_{C_2}^-a_4&=b_3b_1+b_4\\
\partial_{C_2}^-a_5&=b_1b_2\\
\partial_{C_2}^-b_1=\partial_{C_2}^-b_2&=0\\
\partial_{C_2}^-b_3&=c_1\\
\partial_{C_2}^-b_4&=c_1b_1\\
\partial_{C_2}^-b_5&=b_4b_2c_2+c_1a_5c_2+c_1\\
\partial_{C_2}^-b_6&=b_2c_2\\
\partial_{C_2}^-c_1=\partial_{C_2}^-c_2&=0\\
\partial_{C_2}^-a&=1+b_2\\
\partial_{C_2}^-b&=0.
\end{align*}

\subsection{Map associated to $C_3$.}
\label{sec:map-associated-c_2}

Using the notation of Figure \ref{fig:aug1_3}, the bifurcations associated to $C_3$ are given by first $\mathbf{II^{-1}_{b_3 c_1}}$ then $\mathbf{II^{-1}_{a_4 b_4}}$ (going in the decreasing $t$ direction).

\begin{figure}[!h]
\labellist
\small\hair 2pt
\pinlabel {$b_5$} [bl] at 325 155
\pinlabel {$b_4$} [bl] at 295 190
\pinlabel {$a_1$} [bl] at 470 225
\pinlabel {$a_2$} [bl] at 480 110
\pinlabel {$b_6$} [bl] at 430 190
\pinlabel {$a_3$} [bl] at 355 185
\pinlabel {$a_4$} [bl] at  330 220
\pinlabel {$b_3$} [bl] at 300 250
\pinlabel {$c_1$} [bl] at 265 215
\pinlabel {$a_5$} [bl] at 145 220
\pinlabel {$c_2$} [bl] at 225 60
\pinlabel {$b_1$} [bl] at 140 280
\pinlabel {$b_2$} [bl] at 105 185 
\pinlabel {$a$} [bl] at 180 230
\pinlabel {$a_1$} [bl] at 1080 225
\pinlabel {$a_2$} [bl] at 1090 100
\pinlabel {$b_6$} [bl] at 1040 190
\pinlabel {$a_5$} [bl] at 760 220
\pinlabel {$c_2$} [bl] at 845 60
\pinlabel {$b_1$} [bl] at 765 285
\pinlabel {$b_2$} [bl] at 725 185 
\pinlabel {$a$} [bl] at 805 230
\pinlabel {$b_5$} [tr] at 941 155
\pinlabel {$a_3$} [bl] at 979 185
\endlabellist  
 \centering
  \includegraphics[width=12cm]{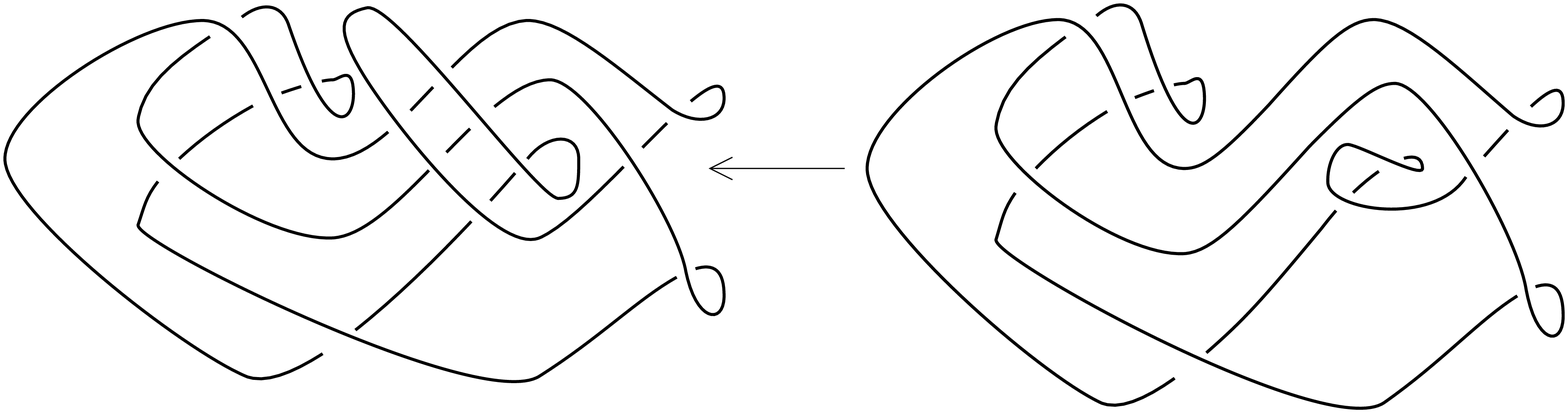}
  \caption{$\mathbf{II^{-1}_{b_3 c_1}\circ  II^{-1}_{a_4 b_4}}$.}
  \label{fig:aug1_3}
\end{figure}

From $\partial^+_{C_3}(b_3)=c_1=c_1+v$ with $v=0$ we deduce (following \cite[Section 6.3.3]{Ekhoka}) that at the first bifurcation $b_3$ maps to $0$ and $c_1$ maps to $v$ thus to $0$. This implies that in the middle of the cobordism one has $\partial(a_4)=b_4$ implying that $a_4$ and $b_4$ maps to $0$. Thus $\varphi_{C_3}$ does the following:
\begin{align*}
  b_3&\rightarrow 0\\
c_1&\rightarrow 0\\
a_4&\rightarrow 0\\
b_4&\rightarrow 0\\
\end{align*}

all other generators being mapped to themselves.
\subsection{Map associated to $C_4$.}
\label{sec:map-associated-c_2-1}

Following the notation of Figure \ref{fig:aug1_4}, the bifurcations associated to the cobordism $C_4$ are, again following the decreasing $t$ direction, first  $\mathbf{III'_{b_1 a_5 a}}$ then $\mathbf{II^{-1}_{a_5 b_1}}$.
\label{sec:ii-1a_5-b_1circ}
\begin{figure}[!h]
\labellist
\small\hair 2pt
\pinlabel {$a_3$} [bl] at 380 180
\pinlabel {$b_5$} [tr] at 338 155
\pinlabel {$a_1$} [bl] at 473 220
\pinlabel {$a_2$} [bl] at 485 100
\pinlabel {$b_6$} [bl] at 435 180
\pinlabel {$a_5$} [bl] at 150 210
\pinlabel {$c_2$} [bl] at 230 50
\pinlabel {$b_1$} [bl] at 146 280
\pinlabel {$b_2$} [bl] at 112 175 
\pinlabel {$a$} [bl] at 190 220
\pinlabel {$a_1$} [bl] at 1090 220
\pinlabel {$a_2$} [bl] at 1095 100
\pinlabel {$b_6$} [bl] at 1045 180
\pinlabel {$c_2$} [bl] at 845 50
\pinlabel {$a$} [bl] at 748 185
\pinlabel {$b_2$} [bl] at 695 150
\pinlabel {$a_3$} [bl] at 990 180
\pinlabel {$b_5$} [tr] at 955 155
\endlabellist  
 \centering
  \includegraphics[width=12cm]{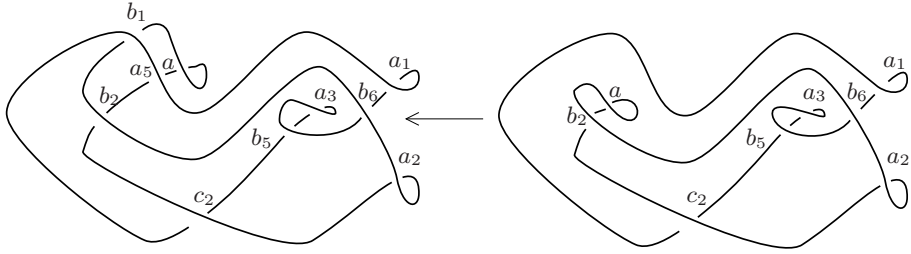}
  \caption{$\mathbf{III'_{b_1 a_5 a}\circ II^{-1}_{a_5 b_1}}$.}
  \label{fig:aug1_4}
\end{figure}

To compute the map associated to $\mathbf{III'_{b_1 a_5 a}}$ we apply \cite[Section 6.3.2]{Ekhoka} and get that $a_5$ maps to $a_5+b_1a$ and all other generators are mapped to themselves.

One computes that  in the middle $\partial(a_5)=\partial^+_{C_4}(a_5)+\partial(b_1a)=b_1b_2+b_1+b_1b_2=b_1$. Applying again \cite[Section 6.3.3]{Ekhoka} we deduce that the bifurcation $\mathbf{II^{-1}_{a_5b_1}}$ maps $a_5$ and $b_1$ to $0$. This implies that $\varphi_{C_4}$ does the following: 
\begin{align*}
  a_5&\rightarrow 0\\
b_1&\rightarrow 0\\
a&\rightarrow a\\
\end{align*}

all other generator being mapped to themselves.

The differential at this step is:
\begin{align*}
  \partial_{C_4}^-a_1&= 0\\
\partial_{C_4}^-a_2&= 1+ b_2\\
\partial_{C_4}^-a&= 1+b_2\\
\partial_{C_4}^-b_2&= 0\\
\partial_{C_4}^-b_6&= b_2c_2\\
\partial_{C_4}^-a_3&= 1+b_5\\
\partial_{C_4}^-b_5&= 0.
\end{align*}

\subsection{Map associated to $C_5$.}
\label{sec:map-associated-c_3}

Using the notation of Figure \ref{fig:augc_3}, the bifurcations corresponding to $C_5$ are $\mathbf{II^{-1}_{a_3b_5}}$ and $\mathbf{II^{-1}_{ab_2}}$ (these are commutative).

\begin{figure}[!h]
\labellist
\small\hair 2pt
\pinlabel {$a_1$} [bl] at 473 215
\pinlabel {$a_2$} [bl] at 485 95
\pinlabel {$b_6$} [bl] at 435 165
\pinlabel {$c_2$} [bl] at 230 45
\pinlabel {$a_1$} [bl] at 1115 205
\pinlabel {$a_2$} [bl] at 1095 95
\pinlabel {$a$} [bl] at 133 182
\pinlabel {$b_2$} [bl] at 75 145
\pinlabel {$a_3$} [bl] at 370 180
\pinlabel {$b_5$} [tr] at 341 155
\pinlabel {$b_6$} [bl] at 1045 190
\pinlabel {$c_2$} [bl] at 845 56
\endlabellist  
 \centering
  \includegraphics[width=12cm]{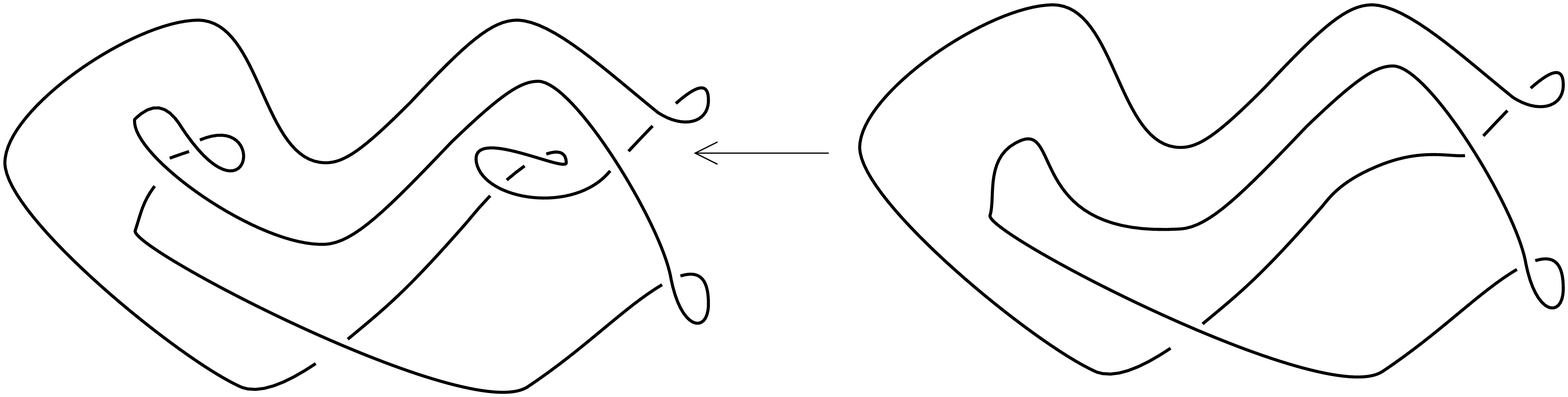}
  \caption{$\mathbf{II^{-1}_{a_3b_5}\circ II^{-1}_{ab_2}}$.}
  \label{fig:augc_3}
\end{figure}

 One easily see that $\varphi_{C_5}$ does the following:
\begin{align*}
a\rightarrow 0\\
b_2\rightarrow 1\\
a_3\rightarrow 0\\
b_5\rightarrow 1  
\end{align*}

and all other generators are mapped to themselves.

The differential becomes:
\begin{align*}
 \partial_{C_5}^-a_1&= 0\\
\partial_{C_5}^-a_2&= 0\\
\partial_{C_5}^-b_6&= c_2\\
\partial_{C_5}^-c_2&= 0.
\end{align*}

\subsection{Map associated to $C_6$.}
\label{sec:map-associated-c_4}

The bifurcation corresponding to $C_6$ is {$\mathbf{II^{-1}_{b_6 c_2}}$.}
\label{sec:ii}
\begin{figure}[!h]
\labellist
\small\hair 2pt
\pinlabel {$a_1$} [bl] at 473 215
\pinlabel {$a_2$} [bl] at 485 95
\pinlabel {$b_6$} [bl] at 435 177
\pinlabel {$c_2$} [bl] at 230 45
\pinlabel {$a_1$} [bl] at 1098 215
\pinlabel {$a_2$} [bl] at 1095 95
\endlabellist  
 \centering
  \includegraphics[width=12cm]{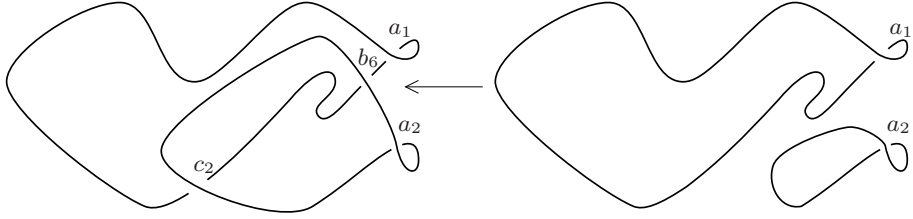}
  \caption{$\mathbf{II^{-1}_{b_6 c_2}}$.}
  \label{fig:aug1_5}
\end{figure}

We have that $\varphi_{C_6}$ does:
\begin{align*}
  a_1&\rightarrow a_1\\
a_2&\rightarrow a_2\\
b_6&\rightarrow 0\\
c_2&\rightarrow 0.
\end{align*}

\subsection{Map associated to $C_7$ and the composition $\mathbf{\varphi_{C}}$.}
\label{sec:comp-varph}
The last part of $C$ is filling one of the components of the link on Figure \ref{fig:aug1_5} with a Lagrangian disk (lets say the one with Reeb chord $a_1$). This has the effect of mapping the corresponding chord to $0$, thus $\varphi_{C_7}(a_1)=0$ and $\varphi_{C_7}(a_2)=a_0$  where $a_0$ is the unique Reeb chords of $\Lambda_0$. 

Combining this to the previous paragraphs we get that the map $$\varphi_{C}=\varphi_{C_7}\circ\varphi_{C_6}\circ\varphi_{C_5}\circ\varphi_{C_4}\circ\varphi_{C_3}\circ\varphi_{C_2}\circ\varphi_{C_1}$$ associated to the concordance of Figure \ref{fig:notriv} is:

\begin{align*}
&a_2\rightarrow a_0\\
&  a_1,a_3,a_4,a_5,b_1,b_3,b_6,c_1,c_2\rightarrow 0\\
&b_2,b_4,b_5\rightarrow 1.
\end{align*}

\section{Lagrangian concordances from $\Lambda_0$ to itself.}
\label{sec:lagr-conc-from}

The aim of this section is to prove the following:
\begin{theorem}
  \label{thm:concmaptriv}
Let $C$ be a Lagrangian concordance from $\Lambda_0$ to $\Lambda_0$ then the map $\varphi_c:\mathcal{A}(\Lambda_0)\rightarrow\mathcal{A}(\Lambda_0)$ induced by $C$ is the identity.
\end{theorem}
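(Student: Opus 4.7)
The plan is to deduce Theorem \ref{thm:concmaptriv} essentially directly from Theorem \ref{thm:lagunknot}, using the invariance of the DGA map $\varphi_C$ under ambient symplectomorphisms that are trivial at the cylindrical ends.

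First I would pass from the symplectization of $(\mathbb{R}^3,\xi)$ to that of the standard contact $S^3$. Since $C$ coincides with $\mathbb{R}\times\Lambda_0$ outside a compact set, it can be regarded as a Lagrangian concordance from $K_0$ to $K_0$ inside $\mathbb{R}\times S^3$ (the Reeb chords of $\Lambda_0$ and $K_0$ agree, and all relevant pseudo-holomorphic curves stay in a compact region, so $\mathcal{A}(\Lambda_0)$ and $\mathcal{A}(K_0)$ coincide and the associated DGA maps are the same). Theorem \ref{thm:lagunknot} then furnishes a compactly supported symplectomorphism $\phi$ of $\mathbb{R}\times S^3$ such that $\phi(C)=\mathbb{R}\times K_0$.

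The central step is the invariance of the DGA map under such $\phi$. Because $\phi$ is compactly supported, it is the identity on a neighborhood of $\{\pm\infty\}\times S^3$, so it preserves the cylindrical ends together with the (constant) almost complex structure there. Pushing forward a cylindrical $J$ by $\phi$ therefore yields an admissible almost complex structure on $\mathbb{R}\times S^3$ compatible with $\mathbb{R}\times K_0$, and $\phi$ induces a bijection between the moduli spaces defining $\varphi_C$ (with respect to $J$) and those defining $\varphi_{\mathbb{R}\times K_0}$ (with respect to $\phi_*J$). Invariance of $\varphi_{-}$ under the choice of $J$ on a fixed Lagrangian (proven in \cite{Ekhoka}) then gives $\varphi_C=\varphi_{\mathbb{R}\times K_0}$.

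Finally I would observe that $\varphi_{\mathbb{R}\times K_0}=\mathrm{Id}$. The algebra $\mathcal{A}(\Lambda_0)$ is generated by the single Reeb chord $a_0$, and for the trivial cylinder over $K_0$ the relevant pseudo-holomorphic disks with one positive puncture at $a_0$ at $+\infty$ are exactly the constant strips with one negative puncture at $a_0$ at $-\infty$; this is standard. Combining these three steps yields $\varphi_C=\mathrm{Id}$.

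The main obstacle I anticipate is Step~2: making precise that a compactly supported symplectomorphism of $\mathbb{R}\times S^3$ really does induce the identity on LCH at the level of DGAs (not just on homology), which amounts to an invariance statement for $\varphi$ under changes of almost complex structure on a Lagrangian cobordism with fixed ends. A minor secondary issue is the passage between $\mathbb{R}^3$ and $S^3$, which is harmless but should be spelled out explicitly.
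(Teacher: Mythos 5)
Your proposal follows essentially the same route as the paper: embed the concordance into $\mathbb{R}\times S^3$, straighten it with the compactly supported symplectomorphism provided by Theorem \ref{thm:lagunknot}, transport the cylindrical almost complex structure, and observe that only trivial strips contribute over the trivial cylinder $\mathbb{R}\times K_0$. The obstacle you flag in your last paragraph (chain-level versus homology-level invariance under the change of $J$) is resolved in the paper by noting that the differential on $\mathcal{A}(\Lambda_0)$ vanishes, so $H(\mathcal{A}(\Lambda_0))=\mathcal{A}(\Lambda_0)$ and the invariance of the induced map in homology already determines $\varphi_C$ itself.
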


This follows from Theorem \ref{thm:lagunknot} of which we give a proof now.

\begin{proof}[Proof of Theorem \ref{thm:lagunknot}]
This is actually a corollary of the main result of \cite{Eliashberg_&_Local_Lagrangian_knots}.

Let $C\subset \mathbb{R}\times S^3$ be an oriented Lagrangian cobordism from $K_0$ to itself. First note that since $tb(K_0)-tb(K_0)=0$ it follows from \cite{chantraine_conc} that $C$ is topologically a cylinder.

The symplectisation of $S^3$ is symplectomorphic to $\mathbb{C}^2\setminus{0}$ with its standard symplectic form. Under this symplectomorphism the $t$-direction becomes the radial direction. A parametrisation of $K_0$ in $S^3$ is given by $\{(cos(\theta),\sin(\theta))\vert\theta\in [0,2\pi)\}\subset \mathbb{C}^2$ i.e. $\Lambda_0=\mathbb{R}^2\cap S^3\subset\mathbb{C}^2$ where $\mathbb{R}^2=\{(x,y)\vert x,y\in\mathbb{R}\}\subset\mathbb{C}^2$. Thus $C$ is a Lagrangian cylinder which coincides near $0$ and outside a compact ball with the trivial Lagrangian plane, i.e. $C_1=C\cup \{0\}$ is local Lagrangian knot (following the terminology of \cite{Eliashberg_&_Local_Lagrangian_knots}). It follows from the main result of \cite{Eliashberg_&_Local_Lagrangian_knots} that there exist a compactly supported Hamiltonian diffeomorphism $\phi_H$ such that $\phi_H(C_1)=\mathbb{R}^2\subset\mathbb{C}^2$.

For $\epsilon>0$ we denote by $D_\epsilon$ the ball of radius $\epsilon$ in $\mathbb{C}^2$. Take $\epsilon$ sufficiently small so that $C_\epsilon:=C_1\cap D_\epsilon=\mathbb{R}^2\cap D_\epsilon$. Since $\phi_H$ maps $C_1$ to $\mathbb{R}^2$ then $\phi_H(C_\epsilon)\subset \mathbb{R}^2$ and there exists a compactly supported diffeomorphism isotopic to the identity $f$  of $\mathbb{R}^2$ such that $f(\phi_H(C_\epsilon))=C_\epsilon$. Using standard construction one can extend $f$ to a compactly supported Hamiltonian diffeomorphism $\widetilde{f}$ of $\mathbb{C}^2$ (which by assumption preserves $\mathbb{R}^2$). Thus $\phi_1=\widetilde{f}\circ\phi_H$ is a compactly supported Hamiltonian diffeomorphism mapping $C_1$ to $\mathbb{R}^2$ such that $\phi_1\vert_{C_\epsilon}=Id$. Now standard application of Moser's path method leads to an Hamiltonian diffeomorphism $\phi'$ supported in $D_\epsilon$  such that $\phi'$ preserves $\mathbb{R}^2$ and $\phi'\circ\phi_1\vert_{D_{\epsilon'}}=Id$ for $\epsilon'<<\epsilon$. Restricting $\phi'\circ\phi_1$ to $\mathbb{C}^2\setminus\{0\}$ proves the theorem. 
\end{proof}

We are now able to prove Theorem \ref{thm:concmaptriv}.

\begin{proof}[Proof of Theorem \ref{thm:concmaptriv}]
 Take a contact embedding of $(\mathbb{R}^3,\xi_0)\rightarrow (S^3,\xi_0)$ as in \cite[Proposition 2.1.8]{Geiges_Intro} such that $\Lambda_0$ is mapped to $K_0$. This embedding induces a symplectic embedding of $\mathbb{R}\times \mathbb{R}^3$ in $\mathbb{R}\times S^3\simeq \mathbb{C}^2\setminus\{0\}$. Under this identification the concordance $C$ maps to a concordance from $K_0$ to itself. Theorem \ref{thm:lagunknot} implies that there exist a compactly supported symplectomorphism $\phi$ mapping $C$ to the trivial cylinder of $K_0$.

Since $\phi$ is the identity near $\pm\infty$, for any cylindrical almost complex structure $J$ on $\mathbb{R}\times S^3$ admissible (in the sense of \cite{Ekholm_Rational_SFT}) for the trivial concordance we get that $(\phi^{-1})^*J$ is admissible for the original concordance $C$. This implies the induced map by $C$ is the same map as the one induced by $\mathbb{R}\times K_0$ which is the identity (because the only degree $0$ pseudo-holomorphic curve on the trivial concordance is the trivial one). Since $H(\mathcal{A}(\Lambda_0))=\mathcal{A}(\Lambda_0)$ and the induced map in homology by $\varphi_C$ do not depends on auxiliary choices, we get that the map do not depend on the choice of the almost complex structure cylindrical at infinities. This conclude the proof. 
\end{proof}

\section{Non symmetry of Lagrangian concordances.}
\label{sec:non-symetry-th}

In order to prove Theorem \ref{thm:principal} we use the augmentation category of $\Lambda$ denoted by $\mathit{Aug}(\Lambda)$. This is an $\mathcal{A}_\infty$-category defined in \cite{augcat} whose objects are augmentations of the Chekanov algebra and morphisms in the homological category are bilinearised Legendrian contact cohomology groups. 

Recall that an augmentation $\varepsilon$ of a DGA $(\mathcal{A},\partial)$ over $\mathbb{Z}_2$ is simply a DGA map from $(\mathcal{A},\partial)$ to $(\mathbb{Z}_2,0)$. 

Bilinearised cohomology groups are generalisations of linearised Legendrian contact cohomology groups (as defined in \cite{Chekanov_DGA_Legendrian}) introduced in \cite{augcat} using two augmentations instead of one and keeping track of the non-commutativity of $\mathcal{A}(\Lambda)$. Basically for two augmentations $\varepsilon_1$ and $\varepsilon_2$ and a word $b_1\ldots b_k$ in $\partial a$ the expression $$\sum_j\varepsilon_1(b_1)\varepsilon_1(b_2)\ldots \varepsilon_1(b_{j-1})\cdot b_j \cdot \varepsilon_2(b_{j+1})\ldots\varepsilon_2(b_k)$$ contributes to $d^{\varepsilon_1,\varepsilon_2}a$.

Dualising $d^{\varepsilon_1,\varepsilon_2}$ leads to bilinearised Legendrian contact cohomology differential $\mu^1_{\varepsilon_1,\varepsilon_2}:C_{\varepsilon_1,\varepsilon_2}(\Lambda)\rightarrow C_{\varepsilon_1,\varepsilon_2}(\Lambda)$ (where $C_{\varepsilon_1,\varepsilon_2}(\Lambda)$ is the vector space generated by Reeb chords of $\Lambda$) whose homology forms morphisms space in the homological category of the augmentation category. Higher order compositions are defined using similar considerations with more than $2$ augmentations. For instance the composition of morphisms $\mu^2_{\varepsilon_1,\varepsilon_2,\varepsilon_3}$ is defined as the dual of the map $d_2^{\varepsilon_3,\varepsilon_2,\varepsilon_1}$ which to a word $b_1\ldots b_k$ in $\partial a$ associates $$\sum_{i,j}\varepsilon_3(b_1)\ldots\varepsilon_3(b_{i-1})\cdot b_i\cdot \varepsilon_2(b_{i+1})\ldots\varepsilon_2(b_{j-1})\cdot b_j\cdot \varepsilon_1(b_{j+1})\ldots\varepsilon_1(b_k).$$

We are now ready to prove Theorem \ref{thm:principal}.
\begin{proof}[Proof of Theorem \ref{thm:principal}]
  The first part on the existence of the concordance has been proved in Section \ref{sec:non-triv-conc}.  It remains to prove that no concordance from $\Lambda$ to $\Lambda_0$ exists.

Assume that such a concordance $C'$ exists and denote by $\varphi_{C'}:\mathcal{A}(\Lambda_0)\rightarrow\mathcal{A}(\Lambda)$ the induced map. Let $C$ be the concordance of Section \ref{sec:legendr-cont-homol} which induced the map $\varphi_{C}$. 

The concatenation of $C'$ with $C$ leads to a concordance from $\Lambda_0$ to itself. Theorem \ref{thm:concmaptriv} implies that the map induced by this concatenation is $Id:\mathcal{A}(\Lambda_0)\rightarrow\mathcal{A}(\Lambda_0)$. Hence by \cite[Theorem 1.2]{Ekhoka} we get that $\varphi_{C}\circ\varphi_{C'}=Id$. 

Now following \cite[Section 2.4]{augcat} we get that $\varphi_{C'}$ induces an $\mathcal{A}_\infty$-functor $\mathcal{F}_{C'}:\mathit{Aug}(\Lambda)\rightarrow\mathit{Aug}(\Lambda_0)$ (obtained by dualising the components of the map $\varphi_{C'}$). Similarly  $\varphi_{C}$ induces an $\mathcal{A}_\infty$-functor $\mathcal{F}_{C}:\mathit{Aug}(\Lambda_0)\rightarrow\mathit{Aug}(\Lambda)$. From $\varphi_{C}\circ\varphi_{C'}=Id$ we get that $\mathcal{F}_{C'}\circ \mathcal{F}_{C}=Id$. 

Note that $\mathcal{A}(\Lambda_0)$ has only one augmentation $\varepsilon_0$ (which maps $a_0$ to $0$). By definition of $\mathcal{F}_C$ its action on the object of the augmentation category is given by $\varepsilon\rightarrow \varepsilon\circ \varphi_C$, thus the explicit computation of Section \ref{sec:legendr-cont-homol} shows that $\mathcal{F}_{C}(\varepsilon_0)=\varphi_{C}\circ\varepsilon_0=\varepsilon_1$ where $\varepsilon_1$ is the first augmentation of Table \ref{tab:twoaug}. Table \ref{tab:twoaug} also shows another augmentation of $\mathcal{A}(\Lambda)$ we will use to compute bilinearised cohomology groups.
  \begin{table}[hftp]
    \centering
    \begin{tabular}{|c|c|c|c|c|c|c|}
\hline       
& $b_1$ &$b_2$&$b_3$&$b_4$&$b_5$&$b_6$\\
\hline
$\varepsilon_1$ & $0$ &$1$&$0$&$1$&$1$&$0$\\
\hline
$\varepsilon_2$&$1$ &$0$&$1$&$0$&$0$&$1$\\
\hline
    \end{tabular}
    \caption{Two augmentations of $\mathcal{A}(\Lambda)$.}
    \label{tab:twoaug}
  \end{table}

We will now show that the two augmentation $\varepsilon_1$ and $\varepsilon_2$ are not equivalent.

Table \ref{tab:d12} gives the bilinearised differential for all possible pairs out of those two augmentations (as $b_1$ and $b_2$ are always mapped to $0$ we omit them from the table).
  \begin{table}[!h]
    \centering
    \begin{tabular}[!h]{|c|c|c|c|c|c|c|c|c|c|}
\hline   
  & $a_1$ & $a_2$ & $a_3$ & $a_4$ & $a_5$ &  $b_3$ & $b_4$ & $b_5$ & $b_6$\\
\hline
$d^{\varepsilon_1,\varepsilon_1}$  & $b_2$ & $b_2$ & $b_3+b_2+b_5$ & $b_2+b_4$ & $b_1$ & $c_1$ & $0$ & $c_1$ & $c_2$\\
\hline
$d^{\varepsilon_2,\varepsilon_2}$  & $b_1+b_2+b_6$ & $b_2+b_6$ & $b_3$ & $b_3+b_1$ & $b_2$  & $0$ & $c_1$ & $c_2+c_1$ & $0$\\
\hline
$d^{\varepsilon_1,\varepsilon_2}$  & $b_1+b_2$ & $b_6+b_2$ & $b_3+b_5$ & $b_3+b_4$ & $0$ & $c_1$ & $c_1$ & $c_1$ & $0$\\
\hline
$d^{\varepsilon_2,\varepsilon_1}$ & $b_6+b_2$ & $b_6+b_4$ & $b_3+b_2$ & $b_1+b_2$ & $b_1+b_2$ & $0$ & $0$ & $c_2+c_1$ & $0$\\
\hline
    \end{tabular}
    \caption{Bilinearised differentials for $\Lambda$.}
    \label{tab:d12}
  \end{table}
 
Notice that for linearised LCH (the first two lines) there are no non-trivial homology in degree $-1$ whereas for the mixed augmentation there is always a generator of degree $-1$. It follows then from \cite[Theorem 1.4]{augcat} that the two augmentations $\varepsilon_1$ and $\varepsilon_2$ are not equivalent. 

In order to conclude, one must study the compositions in the augmentation category and its homological category, thus we need to consider the bilinearised cohomology groups. From Table \ref{tab:d12} we get that the bilinearised differentials in cohomology are those given in Table \ref{tab:mu12}.

\begin{table}[!h]
  \centering
  \begin{tabular}[!h]{|c|c|c|c|c|c|c|c|c|}
   \hline
 & $b_1$ & $b_2$ & $b_3$ & $b_4$ & $b_5$ & $b_6$ &$c_1$ & $c_2$ \\
\hline
$\mu^1_{\varepsilon_1,\varepsilon_1}$ & $a_5$ & $a_1+a_2+a_3+a_4$ & $a_3$ & $a_4$ & $a_3$ & $0$ &$b_3+b_5$ & $b_6$ \\
\hline
$\mu^1_{\varepsilon_2,\varepsilon_2}$ & $a_1+a_4$ & $a_1+a_2+a_5$ & $a_3+a_4$ & $0$ & $0$ & $a_1+a_2$ &$b_4+b_5$ & $b_5$ \\
\hline
$\mu^1_{\varepsilon_1,\varepsilon_2}$ & $a_4+a_5$ & $a_1+a_3+a_4+a_5$ & $a_3$ & $a_2$ & $0$ & $a_1+a_2$ &$b_5$ & $b_5$ \\
\hline
$\mu^1_{\varepsilon_2,\varepsilon_1}$ & $a_1$ & $a_1+a_2$ & $a_3+a_4$ & $a_4$ & $a_3$ & $0$ &$b_3+b_4+b_5$ & $0$ \\
\hline
  \end{tabular}
  \caption{$\mu^1_{\varepsilon_i,\varepsilon_j}$ on $\Lambda$.}
  \label{tab:mu12}
\end{table}

From Table \ref{tab:mu12} we can see that $LCH_{\varepsilon_1}^1$ has one generator $[a_1]=[a_2]$ (since $a_1+a_2=\mu^1_{\varepsilon_1}(b_2+b_3+b_4)$) and that $LCH_{\varepsilon_1}^0$ has dimension $0$ (since $b_6=\mu^1_{\varepsilon_1}(c_2)$). As $\mathcal{F}_{C'}\circ \mathcal{F}_{C}$ is the identity we get that $H(\mathcal{F}_{C'}^1)\circ H(\mathcal{F}^1_{C}): LCH_{\varepsilon_0}(\Lambda_0)\rightarrow LCH_{\varepsilon_0}(\Lambda_0)$ is the identity. This implies that in the homological category $H(\mathcal{F}^1_C):LCH_{\varepsilon_1}(\Lambda)\rightarrow LCH_{\varepsilon_0}(\Lambda_0)$ is surjective in particular the only generator $[a_2]$ of $LCH_{\varepsilon_1}^1(\Lambda)$ is mapped to $[a_0]$ the generator $LCH^1_{\varepsilon_0}(\Lambda_0)$. 

In order to understand the compositions in the category, we need to compute $\partial_2^{\varepsilon_1,\varepsilon_2,\varepsilon_1}$ which gives
\begin{align}
  \label{eq:2}
  a_1&\rightarrow b_1b_6\nonumber\\
a_2&\rightarrow c_2a_4+c_2a_5+b_6b_4\\
a_3&\rightarrow b_2b_5\nonumber\\
a_4&\rightarrow b_3b_1+b_2b_4\nonumber\\
a_5&\rightarrow b_1b_2\nonumber\\
b_3&\rightarrow b_2c_1\nonumber\\
b_4&\rightarrow c_1b_1\nonumber\\
b_6&\rightarrow c_2b_2+b_2c_2.\nonumber
\end{align}

From Formula \eqref{eq:2} we see that $\mu_{\varepsilon_1,\varepsilon_2,\varepsilon_1}^2(a_5,c_2)=a_2\in C_{\varepsilon_1,\varepsilon_1}(\Lambda)$. As the composition $[x]\circ [y]$ in the homological category is given by $[\mu^2(x,y)]$ we get that $[a_5]\circ [c_2]=[a_2]$. Since $\mathcal{F}_{C'}$ is an $\mathcal{A}_\infty$-functor we get that $H(\mathcal{F_{C'}}^1)$ preserves this composition (see \cite[Section 2.3]{augcat}) thus we have that $0\not= [a_0]=H(\mathcal{F}_{C'}^1)([a_2])=H(\mathcal{F}_{C'}^1)([a_5])\circ H(\mathcal{F}_{C'}^1)([c_2])$. However  $H(\mathcal{F}_{C'}^1)([c_2])\in LCH_{\varepsilon_0}^{-1}(\Lambda_0)\simeq \{0\}$. Thus $[a_0]=H(\mathcal{F}_{C'}^1)([a_5])\circ 0=0$, this contradicts the existence of $\mathcal{F}_{C'}$ and hence the existence of $C'$. Thus $\Lambda\not\prec\Lambda_0$.

\end{proof}

\bibliographystyle{plain}
 \bibliography{Bibliographie_en}
\end{document}